\begin{document}

\newtheorem{theorem}{Theorem}
\newtheorem{lemma}[theorem]{Lemma}
\newtheorem{claim}[theorem]{Claim}
\newtheorem{cor}[theorem]{Corollary}
\newtheorem{prop}[theorem]{Proposition}
\newtheorem{definition}{Definition}
\newtheorem{question}[theorem]{Question}
\newcommand{\hh}{{{\mathrm h}}}

\numberwithin{equation}{section}
\numberwithin{theorem}{section}
\numberwithin{table}{section}

\def\sssum{\mathop{\sum\!\sum\!\sum}}
\def\ssum{\mathop{\sum\ldots \sum}}
\def\iint{\mathop{\int\ldots \int}}

\def\squareforqed{\hbox{\rlap{$\sqcap$}$\sqcup$}}
\def\qed{\ifmmode\squareforqed\else{\unskip\nobreak\hfil
\penalty50\hskip1em\null\nobreak\hfil\squareforqed
\parfillskip=0pt\finalhyphendemerits=0\endgraf}\fi}

\newfont{\teneufm}{eufm10}
\newfont{\seveneufm}{eufm7}
\newfont{\fiveeufm}{eufm5}
%
%
\newfam\eufmfam
     \textfont\eufmfam=\teneufm
\scriptfont\eufmfam=\seveneufm
     \scriptscriptfont\eufmfam=\fiveeufm
%
%
\def\frak#1{{\fam\eufmfam\relax#1}}

\newcommand{\bflambda}{{\boldsymbol{\lambda}}}
\newcommand{\bfmu}{{\boldsymbol{\mu}}}
\newcommand{\bfxi}{{\boldsymbol{\xi}}}
\newcommand{\bfrho}{{\boldsymbol{\rho}}}

\def\fK{\mathfrak K}
\def\fT{\mathfrak{T}}

\def\fA{{\mathfrak A}}
\def\fB{{\mathfrak B}}
\def\fC{{\mathfrak C}}

\def \balpha{\bm{\alpha}}
\def \bbeta{\bm{\beta}}
\def \bgamma{\bm{\gamma}}
\def \blambda{\bm{\lambda}}
\def \bchi{\bm{\chi}}
\def \bphi{\bm{\varphi}}
\def \bpsi{\bm{\psi}}

\def\eqref#1{(\ref{#1})}

\def\vec#1{\mathbf{#1}}


\def\cA{{\mathcal A}}
\def\cB{{\mathcal B}}
\def\cC{{\mathcal C}}
\def\cD{{\mathcal D}}
\def\cE{{\mathcal E}}
\def\cF{{\mathcal F}}
\def\cG{{\mathcal G}}
\def\cH{{\mathcal H}}
\def\cI{{\mathcal I}}
\def\cJ{{\mathcal J}}
\def\cK{{\mathcal K}}
\def\cL{{\mathcal L}}
\def\cM{{\mathcal M}}
\def\cN{{\mathcal N}}
\def\cO{{\mathcal O}}
\def\cP{{\mathcal P}}
\def\cQ{{\mathcal Q}}
\def\cR{{\mathcal R}}
\def\cS{{\mathcal S}}
\def\cT{{\mathcal T}}
\def\cU{{\mathcal U}}
\def\cV{{\mathcal V}}
\def\cW{{\mathcal W}}
\def\cX{{\mathcal X}}
\def\cY{{\mathcal Y}}
\def\cZ{{\mathcal Z}}
\newcommand{\rmod}[1]{\: \mbox{mod} \: #1}

\def\cg{{\mathcal g}}

\def\vr{\mathbf r}

\def\e{{\mathbf{\,e}}}
\def\ep{{\mathbf{\,e}}_p}
\def\em{{\mathbf{\,e}}_m}

\def\Tr{{\mathrm{Tr}}}
\def\Nm{{\mathrm{Nm}}}

 \def\SS{{\mathbf{S}}}

\def\lcm{{\mathrm{lcm}}}

\def\({\left(}
\def\){\right)}
\def\fl#1{\left\lfloor#1\right\rfloor}
\def\rf#1{\left\lceil#1\right\rceil}

\def\mand{\qquad \mbox{and} \qquad}

         \newcommand{\comm}[1]{\marginpar{\vskip-\baselineskip
         \raggedright\footnotesize
\itshape\hrule\smallskip#1\par\smallskip\hrule}}




\hyphenation{re-pub-lished}

\mathsurround=1pt

\def\bfdefault{b}
\overfullrule=5pt

\def \F{{\mathbb F}}
\def \K{{\mathbb K}}
\def \Z{{\mathbb Z}}
\def \Q{{\mathbb Q}}
\def \R{{\mathbb R}}
\def \C{{\\mathbb C}}
\def\Fp{\F_p}
\def \fp{\Fp^*}

\def\Kmn{\cK_p(m,n)}
\def\psmn{\psi_p(m,n)}
\def\SI{\cS_p(\cI)}
\def\SIJ{\cS_p(\cI,\cJ)}
\def\SAIJ{\cS_p(\cA;\cI,\cJ)}
\def\SABIJ{\cS_p(\cA,\cB;\cI,\cJ)}
\def \xbar{\overline x_p}

\title[Cancellations Amongst Kloosterman Sums]{Cancellations Amongst Kloosterman Sums}

 \author[I. E. Shparlinski] {Igor E. Shparlinski}
 \thanks{This work was  supported in part by ARC Grant~DP140100118 (for I. E. Shparlinski), by NSFC Grant~11201275
and the Fundamental Research Funds for the Central Universities
Grant~GK201503014 (for T. P. Zhang)}
  \thanks{T. P. Zhang was the corresponding author (tpzhang@snnu.edu.cn).}

\address{Department of Pure Mathematics, University of New South Wales,
Sydney, NSW 2052, Australia}
\email{igor.shparlinski@unsw.edu.au}

 \author[T. P. Zhang] {Tianping Zhang}
\address{School of Mathematics and Information Science, Shaanxi Normal University, Xi'an 710062 Shaanxi, P. R. China}
\email{tpzhang@snnu.edu.cn}

\begin{abstract} We obtain several  estimates for bilinear form with Kloosterman sums. Such  results
can be interpreted as a measure of cancellations amongst with
parameters from short intervals. In particular, for certain ranges of parameters we improve
some recent results of Blomer,  Fouvry, Kowalski, Michel, and Mili{\'c}evi{\'c} (2014)
and Fouvry,   Kowalski and Michel (2014).
 \end{abstract}

\keywords{Kloosterman sums, cancellation, bilinear form}
\subjclass[2010]{11D79, 11L07}

\maketitle

\section{Introduction}
\label{sec:intro}

 Let  $p$ be   a sufficiently large
prime. For integers $m$ and $n$ we define
the Kloosterman sum
$$
\Kmn = \sum_{x=1}^{p-1} \ep\(mx +n\xbar \),
$$
where $\xbar$ is the multiplicative inverse of $x$ modulo $p$ and
$$
\ep(z) = \exp(2 \pi i z/p).
$$
Furthermore, given two intervals
$$
\cI = [K+1, K+M],\ \cJ = [L+1, L+N] \subseteq [1, p-1],
$$
and  two sequences of weights $\cA = \{\alpha_m\}_{m\in \cI}$ and $\cB = \{\beta_n\}_{n\in \cJ}$, we define the bilinear sums of Kloosterman sums
$$
\SABIJ = \sum_{m\in \cI} \sum_{n \in \cJ} \alpha_m \beta_n \cK_p(mn,1).
$$
We also consider the following special cases
 \begin{equation*}
\begin{split}
& \SAIJ = \cS_p\(\cA, \{1\}_{n=1}^N;\cI,\cJ\)= \sum_{m\in \cI} \sum_{n \in \cJ} \alpha_m \cK_p(mn,1),\\
&\SIJ= \cS_p\(\{1\}_{m=1}^M, \{1\}_{n=1}^N;\cI,\cJ\)= \sum_{m\in \cI} \sum_{n \in \cJ} \cK_p(mn,1),\\
&\SI = \cS_p\(\{1\}_{m=1}^M, \emptyset;\cI,\cJ\) =  \sum_{m\in \cI}  \cK_p(m,1).
\end{split}
\end{equation*}

Making the change of variable $x \to nx \pmod p$, one immediately observes  that $\cK_p(mn,1)  = \Kmn$, thus  we also have
$$
 \SIJ= \sum_{m\in \cI} \sum_{n \in \cJ}  \Kmn.
$$

We aslo define,  for real $\sigma>0$,
 $$
 \|\cA\|_\sigma =\( \sum_{m\in \cI} |\alpha_m|^\sigma\)^{1/\sigma}
 \mand
  \|\cB\|_\sigma =\( \sum_{n\in \cJ} |\beta_n|^\sigma\)^{1/\sigma}
 $$
 with the usual convention
 $$
 \|\cA\|_\infty=\max_{m\in \cI}|\alpha_m| \mand  \|\cB\|_\infty=\max_{n\in \cJ}|\beta_n|.
$$

By the Weil bound we have
$$
|\Kmn|\le 2 p^{1/2},
$$
see~\cite[Theorem~11.11]{IwKow}. Hence
 \begin{equation}
\label{eq:trivial AB}
\SABIJ \le 2 \|\cA\|_1 \|\cB\|_1 p^{1/2}.
\end{equation}
We are interested in studying cancellations amongst Kloosterman sums and
thus improvements of the trivial bound~\eqref{eq:trivial AB}.

Throughout the paper,  as usual $A\ll B$  is equivalent to the inequality $|A|\le cB$
with some  constant $c>0$
(all implied constants are absolute throughout the paper).

\section{Previous results}

First we note that by a very special case of a much more general
result of Fouvry, Michel, Rivat and S{\'a}rk{\"o}zy~\cite[Lemma~2.3]{FMRS} we have
$$
\SI  \ll p \log p
$$
which for  $M \ge p^{1/2} \log p$
 improves the trivial bound $\SI \le 2M p^{1/2}$ following from~\eqref{eq:trivial AB}.
 Recently,   Fouvry,  Kowalski,
 Michel,   Raju,   Rivat and   Soundararajan~\cite[Corollary~1.6]{FKMRRS}
have given the bound
$$
\SI  \ll M p^{1/2} (\log p)^{-\eta}
$$
provided that $M \ge p^{1/2}  (\log p)^{-\eta}$ with some absolute
constant $\eta > 0$.

It is also easy to derive from~\cite[Theorem~7]{Shp1} that
$$
 \SIJ  \ll MNp^{1/4} + M^{1/2} N^{1/2} p^{1+o(1)},
$$
which improves the trivial bound from~\eqref{eq:trivial AB} for
$MN \ge p^{1+ \varepsilon}$ for any fixed $\varepsilon > 0$.

The sums $\SABIJ$ and $\SAIJ$
 have been estimated by Fouvry,   Kowalski and Michel~\cite[Theorem~1.17]{FKM}
 as  a part of a much more general result about sums of so-called {\it trace functions\/}.
Then, by~\cite[Theorem~1.17(2)]{FKM}, for initial intervals
 $\cI = [1,M]$ and $\cJ = [1,N]$,
 we have
 \begin{equation}
\label{eqSAIJ-1}
 \SAIJ  \le  \|\cA\|_1 p^{1+o(1)} .
 \end{equation}
 Furthermore, by a result of Blomer,  Fouvry, Kowalski, Michel, and Mili{\'c}evi{\'c}~\cite[Theorem~6.1]{BFKMM},
 also for an initial interval $\cI$ and an arbitrary interval $\cJ$, with
 $$
 MN \le p^{3/2} \mand M \le N^2
 $$
 we have
 \begin{equation}
\label{eqSAIJ-2}
 \SAIJ  \le  \( \|\cA\|_1 \|\cA\|_2\)^{1/2}  M^{1/12} N^{7/12} p^{3/4+o(1)}.
 \end{equation}
%
One can also find in~\cite{BFKMM,FKM,KMS} a series of other bounds
on the sums $ \SAIJ$ and $\SABIJ$ and also on more general sums.

Finally, Khan~\cite{Khan} has given a nontrivial estimate
for the analogue of $\SI$ modulo a fixed prime power which is nontrivial
already for $M  \ge p^{\varepsilon}$.

\section{New results}

We start with the sums $\SIJ$ and present a bound which improves~\eqref{eq:trivial AB} already for  $MN \ge p^{1/2+\varepsilon}$.

\begin{theorem}
\label{thm:SIJ}  We have,
$$
 \SIJ  \ll  (p+MN)p^{o(1)}.
$$
\end{theorem}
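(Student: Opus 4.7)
The plan is to open the Kloosterman sum, swap the order of summation, factor, and reduce the bound to a lattice-point count for pairs $(x, \xbar)$. Using the identity $\cK_p(mn,1) = \cK_p(m,n) = \sum_{x=1}^{p-1} \ep(mx + n\xbar)$ (which is the change of variables $x \to nx$ noted in the introduction) and swapping sums yields
$$
\SIJ = \sum_{x=1}^{p-1} G(x)\, H(\xbar),
$$
where $G(y) = \sum_{m \in \cI} \ep(my)$ and $H(z) = \sum_{n \in \cJ} \ep(nz)$ are geometric progressions. Writing $\|t\|$ for the distance from $t$ to the nearest multiple of $p$, summing the geometric series gives the standard bounds
$$
|G(y)| \ll \min\bigl(M, p/\|y\|\bigr), \qquad |H(z)| \ll \min\bigl(N, p/\|z\|\bigr).
$$

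The main step is a dyadic decomposition of the sum over $x$ by the sizes of $\|x\|$ and $\|\xbar\|$. For $A, B$ dyadic in $[1, p/2]$, set
$$
\cN(A, B) = \#\bigl\{x \in [1, p-1] : \|x\| \le A,\ \|\xbar\| \le B\bigr\}.
$$
The key estimate is $\cN(A, B) \ll (AB/p + 1)\, p^{o(1)}$. To prove it, take signed representatives of $x$ and $\xbar$ in $(-p/2, p/2]$, so that $|x| \le A$ and $|\xbar| \le B$; then $x \xbar \equiv 1 \pmod p$ forces $x \xbar = 1 + kp$ for some integer $k$ with $|k| \le AB/p + 1$. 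For each such $k$, the number of solutions $(x, \xbar) \in \Z^2$ with $x \xbar = 1 + kp$ is at most $2\, d(|1 + kp|) \ll p^{o(1)}$, which yields the stated bound after summing over $k$.

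Combining the two ingredients,
$$
|\SIJ| \ll p^{o(1)} \sum_{A, B}\; \(\frac{AB}{p} + 1\) \min(M, p/A)\, \min(N, p/B),
$$
where the outer sum runs over dyadic $A, B \in [1, p/2]$. The $AB/p$ piece factors and, via the estimate $\sum_A A \min(M, p/A) \ll p \log p$, contributes at most $p^{-1} (p \log p)^2 \ll p^{1+o(1)}$. The $+1$ piece factors and, via $\sum_A \min(M, p/A) \ll M \log p$, contributes at most $(M \log p)(N \log p) \ll MN \cdot p^{o(1)}$. Adding these, $|\SIJ| \ll (p + MN) p^{o(1)}$, as claimed.

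The main obstacle is the count $\cN(A, B)$: a completion argument with the Weil bound on Kloosterman sums would introduce an error of $p^{1/2+o(1)}$, which would destroy the desired saving. The divisor-function argument, which loses only $p^{o(1)}$, is therefore essential.
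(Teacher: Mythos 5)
Your proof is correct and follows essentially the same route as the paper: both reduce $\SIJ$ to $\sum_x \min(M,p/\|x\|_p)\min(N,p/\|\xbar\|_p)$ and then invoke the divisor-function count of solutions to $xy\equiv 1 \pmod p$ in a box (the paper's Lemma~\ref{lem:Invers}), which is exactly your bound on $\cN(A,B)$. The only difference is organizational: you run a single dyadic decomposition in both $\|x\|_p$ and $\|\xbar\|_p$, while the paper first splits into four ranges $S_1,\dots,S_4$ and then applies the same geometric-scale decomposition within each; the two bookkeeping schemes are equivalent.
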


We now estimate $\SAIJ$.

\begin{theorem}
\label{thm:SAIJ}  We have,
$$
 \SAIJ  \ll   \|\cA\|_2 N^{1/2}p.
$$
\end{theorem}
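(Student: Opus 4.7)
The plan is to apply the Cauchy--Schwarz inequality in the variable $m$, complete the $m$-sum to a full sum over $\F_p^*$, and then evaluate the resulting second moment of Kloosterman sums directly via orthogonality of additive characters, with no recourse to the Weil bound in the inner step.

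Specifically, Cauchy--Schwarz gives
$$
|\SAIJ|^2 \le \|\cA\|_2^2 \sum_{m \in \cI} \left| \sum_{n \in \cJ} \cK_p(mn,1) \right|^2 \le \|\cA\|_2^2 \sum_{m=1}^{p-1} \left| \sum_{n \in \cJ} \cK_p(mn,1) \right|^2,
$$
where positivity of the summand justifies the completion of the outer sum. Expanding the square and swapping sums reduces the problem to estimating
$$
T = \sum_{n_1,n_2 \in \cJ} W(n_1,n_2), \qquad W(n_1,n_2) = \sum_{m=1}^{p-1} \cK_p(mn_1,1)\,\overline{\cK_p(mn_2,1)}.
$$

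To evaluate $W(n_1,n_2)$ I would substitute the definitions of the Kloosterman sums and carry out the $m$-sum first. By orthogonality of additive characters modulo $p$, the inner sum $\sum_{m=1}^{p-1} \ep(m(n_1 x - n_2 y))$ equals $p-1$ when $n_1 x \equiv n_2 y \pmod p$ and $-1$ otherwise, which collapses the $(x,y)$-double sum onto that locus up to a lower-order term controlled by $|\sum_{x=1}^{p-1} \ep(\overline{x}_p)|^2 = 1$. On the diagonal $n_1 = n_2$ this gives $W(n_1,n_1) = p(p-1) - 1$. For $n_1 \neq n_2$ the constraint forces $\overline{y}_p$ to be a non-trivial scalar multiple of $\overline{x}_p$, so that the remaining $x$-sum reduces to $\sum_{x=1}^{p-1}\ep(c\,\overline{x}_p) = -1$ for some $c \not\equiv 0 \pmod p$, yielding $W(n_1,n_2) = -p - 1$. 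Hence $T \le N(p(p-1)-1) \ll Np^2$, and combining with the Cauchy--Schwarz step produces the claimed bound.

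The argument is essentially routine; the only point deserving care is the favourable cancellation in the off-diagonal terms $W(n_1,n_2)$ with $n_1 \neq n_2$, which is what guarantees that the diagonal contribution of size $Np^2$ sets the true order of $T$ and prevents the off-diagonal from overwhelming the bound when $N$ is large.
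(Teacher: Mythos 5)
Your proof is correct, and it takes a genuinely different route from the paper's. The paper first executes the $n$-sum as a geometric series, writing the sum as $\left|\sum_{m\in\cI}\sum_{x=1}^{p-1}\alpha_m\gamma_x\ep(m\xbar)\right|$ with $|\gamma_x|\le\min\{N,p/\|x\|_p\}$, then splits dyadically according to the size of $\|x\|_p$ and applies the generic bilinear-form bound of Lemma~\ref{lem:DoubleSum} to each piece; in effect the point is that $\sum_x|\gamma_x|^2\ll Np$. You instead apply Cauchy--Schwarz in $m$ at the outset, complete the $m$-sum over $\F_p^*$ (legitimate by positivity, since $\cI\subseteq[1,p-1]$), and evaluate the correlation sums $W(n_1,n_2)$ exactly by orthogonality. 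Your computations check out: since $\cJ\subseteq[1,p-1]$, distinct $n_1,n_2\in\cJ$ are distinct modulo $p$, the locus $n_1x\equiv n_2y$ parametrises as $\overline{y}=n_2n_1^{-1}\overline{x}$, and one gets $W(n,n)=p(p-1)-1$ and $W(n_1,n_2)=-p-1$ off the diagonal, whence $|\SAIJ|^2\le\|\cA\|_2^2\,N(p^2-p-1)$, which is the theorem with an explicit constant. The two arguments are dual --- both ultimately rest on completion and orthogonality modulo $p$ --- but yours avoids the dyadic decomposition and the auxiliary lemma entirely and is arguably cleaner for this particular statement, while the paper's route adapts more readily when the inner variable also carries weights. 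One small remark: the negativity of the off-diagonal terms is not actually needed; the bound $|W(n_1,n_2)|\le p+1$ for $n_1\ne n_2$ already gives an off-diagonal contribution at most $N^2(p+1)\le N(p^2-1)<Np^2$ because $N\le p-1$, so the ``favourable cancellation'' you highlight is a bonus rather than the crux.
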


We can re-write the bounds~\eqref{eqSAIJ-1} and~\eqref{eqSAIJ-2}
in terms of the $\|\cA\|_\infty$ as
 \begin{equation}
\label{eq:SAIJ1-simpl}
 \SAIJ  \ll   \|\cA\|_\infty M p^{1+o(1)}
\end{equation}
and
 \begin{equation}
\label{eq:SAIJ2-simpl}
 \SAIJ  \ll  \|\cA\|_\infty M^{5/6} N^{7/12} p^{3/4+o(1)},
\end{equation}
respectively, and the bound of Theorem~\ref{thm:SAIJ} as
 \begin{equation}
\label{eq:SAIJnew-simpl}
 \SAIJ  \ll  \|\cA\|_\infty M^{1/2} N^{1/2} p^{1+o(1)}.
\end{equation}
We now see for any fixed $\varepsilon>0$ the bound~\eqref{eq:SAIJnew-simpl}
improves~\eqref{eq:SAIJ1-simpl} and~\eqref{eq:SAIJ2-simpl}  for
$$
N < M p^{-\varepsilon} \mand M^4 N \ge p^{3 + \varepsilon}
$$
respectively,
and also applies to  intervals $\cI$ and $\cJ$ at arbitrary positions.

\section{Preparations}

We need the following simple result.
\begin{lemma}
\label{lem:Invers} For any
integers  
$X$ and $Y$ with $1\le X,Y < p$,
the congruence
$$
xy \equiv 1 \pmod p, \qquad 1 \le |x| \le X, \ 1 \le |y| \le Y
$$
has at most $(X Y/p + 1) p^{o(1)}$ solutions.
\end{lemma}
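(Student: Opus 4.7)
The plan is to lift the multiplicative congruence modulo $p$ to a factorisation identity over $\Z$ and then invoke the classical divisor bound. First I would set $xy = 1 + kp$ for the uniquely determined integer $k := (xy-1)/p$. The size restrictions $1 \le |x| \le X < p$ and $1 \le |y| \le Y < p$ give $|1+kp| = |xy| \le XY$, so $k$ ranges over at most $2XY/p + 3 \ll XY/p + 1$ integer values.

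Next, for each fixed such $k$ I would bound the number of ordered pairs $(x,y)$ of nonzero integers with $xy = 1 + kp$. When $k\neq 0$, this count is at most $2\tau(|1+kp|)$: the unsigned value $|x|$ must be a (positive) divisor of $|1+kp|$, and then the two admissible choices of signs are determined by the sign of $1+kp$. When $k=0$, only the pairs $(\pm 1, \pm 1)$ contribute, which is harmless. Since $|1+kp| \le XY+1 < p^2$, the standard estimate $\tau(n) \le n^{o(1)}$ gives $\tau(|1+kp|) \le p^{o(1)}$ uniformly over the admissible $k$.

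Summing over the $\ll XY/p + 1$ values of $k$ therefore yields the desired bound $(XY/p + 1)\, p^{o(1)}$ on the number of solutions. I do not anticipate any real obstacle here: the argument is elementary once the congruence has been lifted, and the only technical point is that the divisor bound must be invoked uniformly throughout the range $|1+kp| < p^2$, which is immediate from the usual logarithmic estimate $\tau(n) = O(\exp(C\log n/\log\log n))$.
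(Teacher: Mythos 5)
Your proof is correct and is essentially the paper's own argument: write $xy = 1+kp$, note $k$ runs over $O(XY/p+1)$ values, and bound the pairs for each fixed $k$ by the divisor function $\tau(|1+kp|) \le p^{o(1)}$. The extra care you take with signs and the $k=0$ case only makes the paper's one-line proof more explicit.
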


\begin{proof} Writing $xy \equiv 1 \pmod p$ as $xy = 1 + kp$ for some
integer $k$ with $|k| \le XY/p$ and using the bound on the divisor function,
see~\cite[Equation~(1.81)]{IwKow},
we get the desired estimate.
\end{proof}

We also  need the following well-known result, which dates back to
 Vinogradov~\cite[Chapter 6, Problem 14.a]{Vin}.

\begin{lemma}
\label{lem:DoubleSum} For arbitrary set $\cU, \cV \subseteq \{0, \ldots, p-1\}$ and complex
numbers $\varphi_u$ and $\psi_v$ with
$$
\sum_{u \in \cU} |\varphi_u|^2  \le \varPhi  \mand
\sum_{v \in \cV}|\psi_v|^2 \le \varPsi,
$$
we have
$$
\left|
\sum_{u\in \cU} \sum_{v \in \cV}
\varphi_u \psi_v \ep(uv) \right| \le \sqrt{\varPhi\varPsi  p}.
$$
\end{lemma}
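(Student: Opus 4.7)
The plan is to prove Lemma~\ref{lem:DoubleSum} by a standard Cauchy--Schwarz argument followed by orthogonality of additive characters modulo $p$; this is essentially a weighted form of Parseval's identity.

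First I would apply the Cauchy--Schwarz inequality to the outer sum over $u \in \cU$, treating $\varphi_u$ as one factor and $\sum_{v \in \cV} \psi_v \ep(uv)$ as the other. This yields
$$
\left|\sum_{u\in \cU} \sum_{v \in \cV}\varphi_u \psi_v \ep(uv)\right|^2
\le \left(\sum_{u \in \cU} |\varphi_u|^2\right)
\sum_{u \in \cU} \left|\sum_{v \in \cV} \psi_v \ep(uv)\right|^2,
$$
so the first factor is bounded by $\varPhi$.

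Next I would complete the sum in the second factor by extending $u$ from $\cU$ to all of $\{0,\ldots,p-1\}$; this can only increase the quantity because every term is non-negative. Expanding the square and swapping the order of summation, the inner sum becomes
$$
\sum_{v_1, v_2 \in \cV} \psi_{v_1}\overline{\psi_{v_2}} \sum_{u=0}^{p-1} \ep\!\bigl(u(v_1-v_2)\bigr),
$$
and by the orthogonality of the additive characters modulo $p$ the innermost sum equals $p$ when $v_1 \equiv v_2 \pmod p$ and vanishes otherwise. Since $\cV \subseteq \{0,\ldots,p-1\}$, this forces $v_1 = v_2$, reducing the expression to $p \sum_{v \in \cV} |\psi_v|^2 \le p\varPsi$.

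Combining these estimates gives $|\sum \sum \varphi_u \psi_v \ep(uv)|^2 \le \varPhi \cdot p \varPsi$, and taking square roots yields the claimed bound. There is no real obstacle here; the only point that requires care is the completion step, where one must note that the square being summed is non-negative so that extending $\cU$ to the full set of residues is legitimate.
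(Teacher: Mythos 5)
Your proof is correct and is the standard argument; the paper itself gives no proof of this lemma, simply citing Vinogradov, and the Cauchy--Schwarz plus completion-and-orthogonality computation you describe is exactly the intended classical derivation. The one step you flag as needing care (extending $u$ to all residues because the summands $\left|\sum_{v}\psi_v\ep(uv)\right|^2$ are non-negative) is handled correctly.
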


\section{Proof of Theorem~\ref{thm:SIJ}}

For an integer $u$ we define
$$
\|u\|_p = \min_{k \in \Z} |u - kp|
$$
as the distance to the closest integer.
Then, changing the order of summation,  we obtain
$$
 \SIJ \ll \sum_{x=1}^{p-1}\min\left\{M,  \frac{p}{\|x\|_p}\right \}
  \min\left\{N,  \frac{p}{\|\xbar\|_p}\right \},
$$
see~\cite[Bound~(8.6)]{IwKow}.
We now write
 \begin{equation}
\label{eq:BIJ S14}
 \SIJ \ll  MN S_1+MpS_2+Np S_3 + p^2 S_4,
\end{equation}
where
 \begin{equation*}
\begin{split}
S_1 & = \sum_{\substack{x=1\\\|x\|_p \le p/M\\ \|\xbar\|_p \le p/N}}^{p-1} 1, \qquad S_2 = \sum_{\substack{x=1\\\|x\|_p \le p/M\\ \|\xbar\|_p > p/N}}^{p-1}  \frac{1}{\|\xbar\|_p},\\
S_3&  = \sum_{\substack{x=1\\\|x\|_p > p/M\\ \|\xbar\|_p  \le p/N}}^{p-1}  \frac{1}{\|x\|_p}, \qquad
S_4 = \sum_{\substack{x=1\\\|x\|_p > p/M\\ \|\xbar\|_p > p/N}}^{p-1}  \frac{1}{\|x\|_p\|\xbar\|_p}.
\end{split}
\end{equation*}

By Lemma~\ref{lem:Invers} we immediately obtain
 \begin{equation}
\label{eq:S1}
S_1 \le  (p/MN + 1)p^{o(1)}.
\end{equation}

To estimate $S_2$, we define $I = \rf{\log p}$ and  write
$$
S_2 \le \sum_{i =0}^{I} S_{2,i},
$$
where
\begin{equation*}
\begin{split}
S_{2,i} & = \sum_{\substack{x=1\\\|x\|_p \le p/M\\ e^{i+1} p/N\ge \|\xbar\|_p > e^ip/N}}^{p-1}  \frac{1}{\|\xbar\|_p}\\
&\ll e^{-i} Np^{-1}  \sum_{\substack{x=1\\\|x\|_p \le p/M\\ e^{i+1} p/N\ge \|\xbar\|_p > e^ip/N}}^{p-1} 1
\ll e^{-i} Np^{-1}  \sum_{\substack{x=1\\\|x\|_p \le p/M\\ \|\xbar\|_p\le e^{i+1} p/N}}^{p-1} 1.
\end{split}
\end{equation*}

Now we use  Lemma~\ref{lem:Invers} again to derive
 \begin{equation}
\label{eq:S2}
\begin{split}
S_2 &\le \sum_{i=0}^{I}\(M^{-1}+e^{-i} Np^{-1}\)p^{o(1)}\\
&\ll (I+1)M^{-1}p^{o(1)}+ Np^{-1+o(1)} \sum_{i=0}^{I} e^{-i}\\
&\ll (I+1) M^{-1}p^{o(1)}+ Np^{-1+o(1)} \\
&\ll M^{-1} p^{o(1)}+ Np^{-1+o(1)}.
\end{split}
\end{equation}

Similarly we obtain
 \begin{equation}
\label{eq:S3}
S_3 \le N^{-1}p^{o(1)}+ Mp^{-1+o(1)}.
\end{equation}

Finally, we write
$$
S_4 \le \sum_{i,j=0}^{I} S_{4,i,j},
$$
where
\begin{equation*}
\begin{split}
S_{4,i,j} & = \sum_{\substack{x=1\\ e^{i+1} p/M \ge \|x\|_p > e^i p/M\\ e^{j+1} p/N\ge \|\xbar\|_p > e^jp/N}}^{p-1}   \frac{1}{\|x\|_p\|\xbar\|_p}\\
&\ll e^{-i-j} MNp^{-2}  \sum_{\substack{x=1\\ e^{i+1} p/M \ge \|x\|_p > e^i p/M\\ e^{j+1} p/N\ge \|\xbar\|_p > e^jp/N}}^{p-1} 1\\
&\ll e^{-i-j} MNp^{-2}  \sum_{\substack{x=1\\\|x\|_p \le  e^{i+1} p/M\\ \|\xbar\|_p\le e^{j+1} p/N}}^{p-1} 1.
\end{split}
\end{equation*}
Applying  Lemma~\ref{lem:Invers} one more time, we obtain
\begin{equation*}
\begin{split}
S_{4,i,j} &  \ll e^{-i-j} MNp^{-2} \(e^{i+j} p/MN + 1\)p^{o(1)}\\
&= (p^{-1}+e^{-i-j}MNp^{-2})p^{o(1)}.
\end{split}
\end{equation*}
Hence
 \begin{equation}
\label{eq:S4}
\begin{split}
S_4 &\le \sum_{i,j=0}^{I}(p^{-1}+e^{-i-j}MNp^{-2})p^{o(1)}\\
&\le (I+1)^2 p^{-1+o(1)}+MNp^{-2+o(1)}\\
&\le p^{-1+o(1)}+MNp^{-2+o(1)}.
\end{split}
\end{equation}

Combining~\eqref{eq:S1}, \eqref{eq:S2}, \eqref{eq:S3} and~\eqref{eq:S4}
 we obtain the result.

\section{Proof of Theorem~\ref{thm:SAIJ}}

Changing the order of summation, as in the proof of Theorem~\ref{thm:SIJ}  we obtain
 \begin{equation*}
\begin{split}
 \SAIJ  & = \left|\sum_{m\in \cI} \sum_{x=1}^{p-1}\alpha_m
 \gamma_x  \ep(m \xbar)\right|,
\end{split}
\end{equation*}
where
$$
|\gamma_x| \le  \min\left\{N,  \frac{p}{\|x\|_p}\right \}.
$$

Thus, similarly to the  proof of Theorem~\ref{thm:SIJ}  we define $I = \rf{\log p}$ and write
 \begin{equation}
\label{eq:BAIJ S0i}
 \SIJ \ll  |S_0|+ \sum_{i=1}^I |S_i|,
\end{equation}
where
 \begin{equation*}
\begin{split}
S_0 &   = \sum_{m\in \cI} \sum_{\substack{x=1\\ \|x\|_p \le p/N}}^{p-1}  \alpha_m
 \gamma_x  \ep(m \xbar), \\
S_i & =  \sum_{m\in \cI} \sum_{\substack{x=1\\  e^{i+1} p/N\ge \|x\|_p > e^ip/N}}^{p-1}
\alpha_m   \gamma_x  \ep(m \xbar), \qquad i = 1, \ldots, I.
\end{split}
\end{equation*}
%

 Now use Lemma~\ref{lem:DoubleSum}, we have
\begin{equation}
\label{eq:S0}
|S_0| \ll  \|\cA\|_2 N\sqrt{ (p/N+1) p }\le   \|\cA\|_2 N^{1/2}p.
\end{equation}
Also, for $i = 1, \ldots, I$, using that if  $e^{i+1} p/N\ge \|x\|_p > e^ip/N$
then $\gamma_x \ll Ne^{-i}$, hence, by  Lemma~\ref{lem:DoubleSum}, we obtain
 \begin{equation*}
\begin{split}
S_i & =  \sum_{m\in \cI} \sum_{\substack{x=1\\  e^{i+1} p/N\ge \|x\|_p > e^ip/N}}^{p-1}
\alpha_m   \gamma_x  \ep(m \xbar) \\
& \ll  \|\cA\|_2 (N^2e^{-2i} e^{i} p/N)^{1/2} p^{1/2} = e^{-i/2}  \|\cA\|_2 N^{1/2} p .
\end{split}
\end{equation*}
Therefore,
\begin{equation}
\label{eq:Si}
\sum_{i=1}^I |S_i| \ll   \|\cA\|_2 N^{1/2} p  \sum_{i=1}^I e^{-i/2}  \ll  \|\cA\|_2 N^{1/2} p.
\end{equation}
Combining~\eqref{eq:S0} and~\eqref{eq:Si},
we obtain the result.

\section{Comments}

It is also natural to consider cancellations between some other exponential and character sums.
For example, in~\cite{Shp2} one can find some bound on  the following sums
\begin{equation*}
\begin{split}
&\cS_{p}(f,\cA, \cB; \fC)=
\sum_{(u,v)\in \fC} \alpha_u \beta_v \e_p(v/f(u)),
\\
&\cT_p(f,\cA, \cB; \fC)=
\sum_{(u,v)\in \fC} \alpha_u \beta_v \chi(v+f(u)),
\end{split}
\end{equation*}
(where $\chi$ is a multiplicative character modulo $p$),
over a {\it convex\/} set
$\fC \subseteq [1,U]\times [1,V]$,
with some integer $1 \le U,V < p$.

Here we also note that one can also obtain a nontrivial cancellation for sums
$$
\cH_{k,p}(a;\cI) = \sum_{m\in \cI} \cG_{k,p}(am)
$$
of Gaussian sums
$$
\cG_{k,p}(a) =  \sum_{x=0}^{p-1}\ep\(ax^k\)
$$
with a positive integer $k \mid p-1$.
Indeed, we define
$$
 \tau_p(a;\chi)= \sum_{x=1}^{p-1}\chi(x) \ep\(ax\),
$$
where $\chi$ is a multiplicative
character; we refer to~\cite[Chapter~3]{IwKow} for a background on multiplicative
characters.  Then by the orthogonality of characters, we have
$$
\cG_{k,p}(a) = \sum_{\substack{\chi^k = \chi_0\\\chi\ne \chi_0}}\overline \chi(a) \tau_p(a;\chi),
$$
where the summation is over all nonprincipal multiplicative characters $\chi$ modulo $p$, such that $\chi^k$
is the principal character $\chi_0$, see also~\cite[Theorem~5.30]{LN}.
Using that $|\tau_p(a;\chi)| = p^{1/2}$ for any nonprincipal multiplicative characters $\chi$
and integer $a$ with $\gcd(a,p)=1$, we derive
$$
|\cH_{k,p}(a;\cI) |= p^{1/2}  \sum_{\substack{\chi^k = \chi_0\\\chi\ne \chi_0}}\overline \chi(a)
 \sum_{m\in \cI} \overline \chi(m).
 $$
 Thus applying the  Burgess bound, see~\cite[Equation~(12.58)]{IwKow},
 we derive that
\begin{equation}
\begin{split}
\label{eq:Ha}
\cH_{k,p}(a;\cI)  &\ll    M^{1 -1/\nu} p^{1/2 + (\nu+1)/(4\nu^2)}(\log p)^{1/\nu} \\
& =    M^{1 -1/\nu} p^{(2\nu^2 + \nu+1)/(4\nu^2)}(\log p)^{1/\nu}
\end{split}
\end{equation}
for any fixed $k\mid p-1$ and $\nu = 1,2, \ldots$.

Similarly, for general quadratic polynomials $f(X) = aX^2 + bX$, with $\gcd(a,p)=1$, we can define
the double sums
$$
\cF_{p}(a,b;\cI) = \sum_{m\in \cI} \sum_{x=0}^{p-1} \ep\(m\(ax^2 + bx\)\).
$$
It is easy to see that
\begin{align*}
\sum_{x=0}^{p-1} &\ep(ax^2 + bx)  =
 \ep\(-\frac{b^2}{4a}\)\sum_{x=0}^{p-1}  \ep\(a\(x+ b/(2a)\)^2 \) \\
& = \ep\(-\frac{b^2}{4a}\)\sum_{x=0}^{p-1}  \ep\(ax^2 \) = \(\frac{a}{p}\)   \ep\(-\frac{b^2}{4a}\)
\cG_{2,p}(1)   ,
\end{align*}
where $(a/p)$ is the Legendre symbol of $a$ modulo $p$.  Hence
\begin{align*}
\sum_{m \in \cI}
\sum_{x=0}^{p-1} \ep\(m(ax^2 + bx)\)
& =  \cG_{2,p}(1)    \sum_{m \in \cI}  \(\frac{am}{p}\)
 \ep\(-\frac{(bm)^2}{4am}\)\\
&= \(\frac{a}{p}\)
\cG_{2,p}(1)   \sum_{m \in \cI}  \(\frac{m}{p}\)  \ep\(-\frac{b^2 m}{4a}\).
 \end{align*}
Now, using  the bound of Burgess~\cite{Burg}  on
short mixed sums (see~\cite{HBP,Kerr,Pierce} for
various generalisations) we easily derive that
for any fixed  $\nu = 2, 3, \ldots$ we have
\begin{equation}
\label{eq:Fab}
\begin{split}
\cF_{p}(a,b;\cI)& \ll   M^{1 -1/\nu} p^{1/2 + 1/(4(\nu-1))} (\log p)^2\\
& = M^{1 -1/\nu} p^{(2\nu-1)/(4(\nu-1))} (\log p)^2,
\end{split}
\end{equation}
where the implied constant may depend on $\nu$.

We note that the bounds~\eqref{eq:Ha} and~\eqref{eq:Fab} are nontrivial
provided that $M \ge p^{1/4+\varepsilon}$ for any fixed $\varepsilon>0$
and sufficiently large $p$.

\section*{Acknowledgement}

The second author gratefully acknowledge the support, the hospitality
and the excellent conditions at the School of Mathematics and Statistics of UNSW during his visit.

This work was  supported in part by ARC Grant~DP140100118 (for I. E. Shparlinski), by NSFC Grant~11201275
and the Fundamental Research Funds for the Central Universities
Grant~GK201503014 (for T. P. Zhang).

\end{document}